\newtheorem{theorem}{Theorem}
\newtheorem{corollary}[theorem]{Corollary}
\newtheorem{definition}[theorem]{Definition}
\newtheorem{proposition}[theorem]{Proposition}
\newtheorem{remark}[theorem]{Remark}
\newenvironment{proof}[1][Proof]{\noindent\textbf{#1.} }{\ \rule{0.5em}{0.5em}}
\newdimen\dummy
\begin{document}

\title{Some variational properties of the weighted $\sigma _{r}-$curvature
for submanifolds in Riemannian manifolds}
\author{Mohammed Benalili \\
Dept. Maths, Facult\'{e} des Sciences, Universit\'{e} UABBT Tlemcen Alg\'{e}%
rie.\\
m\_benalili@yahoo.fr}
\maketitle

\begin{abstract}
The objet of this paper is the study of the variations of a functional whose
integrant is the $r$-th weighted curvature on the hypersurface of a closed
Riemannian manifold. Some applications to hypersurfaces of the Euclidean
space and the unit round sph\`{e}re are given. .
\end{abstract}

\section{\ Introduction}

The study of Riemannian geometry seems to be based essentially on the study
of certain operators such as the shape operator, the Ricci tensor, the
Schouten operator etc...Some functions constructed from these operators play
a fundamental role in Riemannian geometry. Particularly the algebraic
invariants of these operators such as the $r$-th symmetric functions $\sigma
_{r}$ associated with the shape operator and the Newton transformations $%
T_{r}$.The following articles can be consulted on this subject (\cite{M.A1}, 
\cite{M.A2}, \cite{Case}, \cite{Cis}, \cite{K.N1}, \cite{K.N2}, \cite{Reilly}%
, \cite{Reilly2}, \cite{Rund}). Reilly ( see \cite{Reilly} ) has considered
the variations where the integrant of the functional is a function of the $r$%
-th mean curvatures $\sigma _{r}$. In a recent paper Case (see \cite{Case})
introduced and studied the notion of \ $r$-th weighted curvatures. The aim
of this paper is the study of the variations of a functional whose integrant
is $r$-th weighted curvature on the hypersurface of a closed Riemannian
manifold. Some applications to hypersurfaces of the Euclidean space and the
unit round sphere are given.

\section{Preliminaries}

We associate with any endomorphism $A$ on an $n-$dimensional vector space $V$
a family of endomorphisms $\left( T_{r}\right) _{r}$ on $V$ defined
recurrently by:

\[
T_{0}=id_{V} 
\]

\[
T_{r}=\sigma _{r}id_{V}-AT_{r-1},r=1,2,... 
\]

or under a condensed formula%
\[
T_{r}=\dsum\limits_{j=0}^{r}\left( -1\right) ^{j}\sigma _{r-j}A^{j} 
\]%
with $T_{r}=0$, for all $r\geq n.$

First we recall ( see \cite{Case} ) the notion of the weighted elementary
symmetric polynomials.

Let $r\in N\cup \left\{ \infty \right\} $. The $r$-th weighted elementary
symmetric polynomial $\sigma _{r}^{\infty }:R\times R^{n}\rightarrow R,$ is
inductively given by 
\begin{equation}
\left\{ 
\begin{array}{c}
\sigma _{0}^{\infty }(\mu _{o},\mu )=1\ \ \ \ \ \ \ \ \ \ \ \ \ \ \ \ \ \ \
\ \ \ \ \ \ \ \ \ \ \ \ \ \ \ \ \ \ \ \ \ \ \ \ \ \ \ \ \ \ \ \ \ \ \ \ \ \
\ \ \ \ \ \ \ \ \ \ \ \ \ \ \ \ \ \ \ \ \ \ \ \ \ \ \ \ \ \ \  \\ 
\ \ \sigma _{r}^{\infty }(\mu _{o},\mu )=\sigma _{r-1}^{\infty }(\mu
_{o},\mu )\dsum\limits_{j=1}^{n}\mu
_{j}+\dsum\limits_{i=1}^{r-1}\dsum\limits_{j=1}^{n}\left( -1\right)
^{i}\sigma _{r-i-1}^{\infty }(\mu _{o},\mu )\mu _{j}^{i}\text{, for }%
r\geqslant 1\text{ \ \ \ \ \ \ \ \ \ \ \ \ \ \ \ \ \ \ } \\ 
\text{and }\mu =\left( \mu _{1},...,\mu _{n}\right) \in R^{n}\text{\ .\ \ \
\ \ \ \ \ \ \ \ \ \ \ \ \ \ \ \ \ \ \ \ \ \ \ \ \ \ \ \ \ \ \ \ \ \ \ \ \ \
\ \ \ \ \ \ \ \ \ \ \ \ \ \ \ \ \ \ \ \ \ \ \ \ \ \ \ \ \ \ \ }%
\end{array}%
\text{ }\right.  \tag{1}  \label{1}
\end{equation}%
$\sigma _{r}^{\infty }(\mu _{o},\mu )$ is expressed in terms of $\mu _{o}$
and $1,\sigma _{1}(\mu ),...,\sigma _{r}(\mu )$ ( see \cite{Case} page 6):

\begin{proposition}
Given $r\in N\cup \left\{ \infty \right\} $, $\mu _{o}\in R$ and $\mu \in
R^{n}$, we have 
\begin{equation}
\sigma _{r}^{\infty }(\mu _{o},\mu )=\dsum\limits_{j=0}^{r}\frac{\mu _{0}^{j}%
}{j!}\sigma _{r-j}(\mu ).  \tag{2}  \label{2}
\end{equation}
\end{proposition}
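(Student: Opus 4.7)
The plan is to prove the identity by induction on $r$. The base case $r=0$ is immediate, since $\sigma_0^{\infty}(\mu_0,\mu)=1$ by definition and $\sum_{j=0}^{0}\frac{\mu_0^{j}}{j!}\sigma_{0-j}(\mu)=\sigma_0(\mu)=1$. For the inductive step I would substitute the inductive hypothesis
$$\sigma_{k}^{\infty}(\mu_0,\mu)=\sum_{j=0}^{k}\frac{\mu_0^{j}}{j!}\sigma_{k-j}(\mu), \qquad 0\le k\le r-1,$$
into the recurrence (\ref{1}), then swap the order of summation so as to collect the coefficients of each pure power $\mu_0^{j}$ separately. The target claim then reduces to proving, for each $j\in\{0,1,\dots,r\}$, that the coefficient of $\mu_0^{j}/j!$ coming from the right-hand side of (\ref{1}) equals exactly $\sigma_{r-j}(\mu)$.

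The algebraic engine which should collapse these coefficients is Newton's identity
$$k\,\sigma_{k}(\mu)=\sum_{i=1}^{k}(-1)^{i-1}p_{i}(\mu)\,\sigma_{k-i}(\mu),\qquad p_{i}(\mu)=\sum_{\ell=1}^{n}\mu_{\ell}^{i},$$
because the inner double sum $\sum_{i=1}^{r-1}\sum_{j=1}^{n}(-1)^{i}\sigma_{r-i-1}^{\infty}\mu_{j}^{i}$ in (\ref{1}) is nothing but $\sum_{i=1}^{r-1}(-1)^{i}\sigma_{r-i-1}^{\infty}\,p_{i}(\mu)$. Once one has isolated the coefficient of $\mu_0^{j}$, that coefficient is an alternating sum of products $p_{i}(\mu)\,\sigma_{m}(\mu)$ whose structure matches Newton's identity verbatim, telescoping to the single term $\sigma_{r-j}(\mu)$.

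A cleaner alternative, which I would prefer to keep in reserve, is the generating-function reformulation: identity (\ref{2}) is equivalent to
$$\sum_{r\ge 0}\sigma_{r}^{\infty}(\mu_0,\mu)\,t^{r}=e^{\mu_0 t}\prod_{\ell=1}^{n}(1+\mu_{\ell}t),$$
and one then checks that the right-hand side satisfies the same first-order-in-$r$ recurrence (\ref{1}) and the same initialization as the left-hand side, which gives the proposition in one shot by uniqueness of power-series solutions.

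The expected obstacle lies entirely in the bookkeeping of the direct induction: the shifted index $r-i-1$ inside the second sum of (\ref{1}) means that a single power $\mu_0^{j}$ receives contributions from several values of $i$, and one has to reindex carefully (splitting off the terminal term $\mu_0^{r}/r!$ corresponding to $j=r$, where the elementary symmetric polynomial degenerates to $\sigma_0=1$) before Newton's identity can be invoked. After this reorganization the proof is essentially mechanical.
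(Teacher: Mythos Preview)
The paper does not prove this proposition at all: it is quoted verbatim from \cite{Case} (note the parenthetical ``see \cite{Case} page 6'' immediately preceding the statement), and no argument is supplied. There is therefore nothing in the paper to compare your proof against; you are supplying a proof where the paper simply cites one.

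On the merits, both of the routes you sketch are standard and would work. The generating-function version is the cleanest: formula~(\ref{2}) is exactly the statement that $\sum_{r\ge 0}\sigma_r^\infty(\mu_0,\mu)\,t^r=e^{\mu_0 t}\prod_{\ell=1}^n(1+\mu_\ell t)$, and one checks that this power series satisfies the defining recurrence together with the correct initial value. The direct induction via Newton's identities $k\sigma_k=\sum_{i=1}^k(-1)^{i-1}p_i\sigma_{k-i}$ also goes through after the reindexing you describe.

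One warning, however: the recurrence~(\ref{1}) as printed in this paper appears to be mis-stated, since the right-hand side contains no occurrence of $\mu_0$ whatsoever. Starting from $\sigma_0^\infty=1$, such a recurrence would produce quantities independent of $\mu_0$, contradicting~(\ref{2}). The correct recurrence in \cite{Case} carries an extra $\mu_0\sigma_{r-1}^\infty$ term (and an overall factor), namely
\[
r\,\sigma_r^\infty(\mu_0,\mu)=\mu_0\,\sigma_{r-1}^\infty(\mu_0,\mu)+\sum_{i=1}^{r}(-1)^{i-1}p_i(\mu)\,\sigma_{r-i}^\infty(\mu_0,\mu),
\]
and it is \emph{this} version that your induction (or your generating-function check) must use. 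If you run your argument against~(\ref{1}) exactly as printed, it will fail at $r=1$. You should either flag the typo or work directly from Case's original definition.
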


\begin{definition}
Since a symmetric matrix is {}{}diagonalizable one can consider the
symmetric elementary polynomials associated with a real number $\mu _{o}$
and a symmetric matrix $A$ with eigenvalues $\mu =\left( \mu _{1},...,\mu
_{n}\right) $ like being the polynomials acting on that real number $\mu
_{o} $ and on the vector $\mu =\left( \mu _{1},...,\mu _{n}\right) $, so the
weighted elementary symmetric polynomial of $A$ is defined by $\sigma
_{r}^{\infty }(\mu _{o},A)=$ $\sigma _{r}^{\infty }(\mu _{o},\mu ).$
\end{definition}

The weighted Newton transformations are then defined as follows ( see \cite%
{Case})

Let $A$ be a symmetric $n\times n$ real-valued matrix and $\mu _{o}\in R$.
The $r$-th weighted Newton transformation function $T_{r}^{\infty }(\mu
_{o},A)$ of $A$ and $\mu _{o}$ is 
\begin{equation}
T_{r}^{\infty }(\mu _{o},A)=\dsum\limits_{j=0}^{r}\left( -1\right)
^{j}\sigma _{r-j}^{\infty }(\mu _{o},A)A^{j}\text{.}  \tag{3}  \label{3}
\end{equation}%
The weighted elementary symmetric polynomials can be expressed in terms of
the weighted Newton transformations in the following way ( see \cite{Case})

\begin{proposition}
\label{prop3} Let $A$ be a symmetric $n\times n$ real-valued matrix and $\mu
_{o}\in R$. It follows that 
\begin{equation}
\text{trace}(AT_{r}^{\infty }(\mu _{o},A))=\left( r+1\right) \sigma
_{r+1}^{\infty }(\mu _{o},A)-\mu _{o}\sigma _{r}^{\infty }(\mu _{o},A). 
\tag{5}  \label{5}
\end{equation}
\end{proposition}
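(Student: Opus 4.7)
The plan is to reduce the identity to the classical unweighted fact $\operatorname{trace}(AT_m(A))=(m+1)\sigma_{m+1}(A)$, which is a standard consequence of Newton's identities relating power sums to elementary symmetric polynomials. The weighted machinery, via Proposition 2.1, is just a $\mu_o$-parameterized linear combination of the unweighted one, so everything should collapse cleanly.

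First I would substitute the expression from Proposition 2.1, namely $\sigma_{r-j}^{\infty}(\mu_o,A)=\sum_{k=0}^{r-j}\frac{\mu_o^{k}}{k!}\sigma_{r-j-k}(A)$, into the defining formula (3) of $T_r^{\infty}(\mu_o,A)$. After swapping the order of summation and reindexing by $m=r-k$ (so that the inner sum over $j$ runs from $0$ to $m$ with the fixed power $A^{j}$ weighted by $(-1)^{j}\sigma_{m-j}(A)$), the inner sum is precisely the classical Newton transformation $T_m(A)$. This yields the clean reduction
\[
T_r^{\infty}(\mu_o,A)=\sum_{m=0}^{r}\frac{\mu_o^{r-m}}{(r-m)!}\,T_m(A).
\]

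Multiplying by $A$ and taking the trace, and then invoking the classical identity $\operatorname{trace}(AT_m(A))=(m+1)\sigma_{m+1}(A)$, gives
\[
\operatorname{trace}(AT_r^{\infty}(\mu_o,A))=\sum_{m=0}^{r}\frac{\mu_o^{r-m}}{(r-m)!}\,(m+1)\sigma_{m+1}(A).
\]

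It remains to check that the right-hand side of (5) has the same expansion. Using Proposition 2.1 twice, I would expand $(r+1)\sigma_{r+1}^{\infty}(\mu_o,A)$ and $\mu_o\sigma_r^{\infty}(\mu_o,A)$ as sums over a common index $k$, and after shifting the second sum so both run over $k=0,\dots,r+1$, the coefficient of $\mu_o^{k}\sigma_{r+1-k}(A)$ becomes $\frac{r+1}{k!}-\frac{1}{(k-1)!}=\frac{r+1-k}{k!}$ (the $k=0$ term being $(r+1)/0!$ automatically). The term $k=r+1$ vanishes because of the factor $r+1-k$, so reindexing by $m=r-k$ produces exactly the series obtained from the trace computation. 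The main obstacle I anticipate is purely bookkeeping: keeping the double-index shifts straight when collapsing the inner Newton sum and when matching the two expansions of the right-hand side; once the decomposition $T_r^{\infty}=\sum_{m}\frac{\mu_o^{r-m}}{(r-m)!}T_m$ is established, the rest is routine.
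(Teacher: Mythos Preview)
Your argument is correct. The paper itself does not supply a proof of this proposition; it simply cites Case \cite{Case} for the result. Your proof is self-contained and in fact anticipates material the paper develops immediately afterward: the decomposition
\[
T_r^{\infty}(\mu_o,A)=\sum_{m=0}^{r}\frac{\mu_o^{r-m}}{(r-m)!}\,T_m(A)
\]
that you derive as your first step is exactly the content of the paper's next proposition (formula~(\ref{6})), and the classical identity $\operatorname{trace}(AT_m(A))=(m+1)\sigma_{m+1}(A)$ you invoke is the Newton formula~(\ref{8}) the paper quotes from Reilly. So your route---reduce to formula~(\ref{6}), apply~(\ref{8}), then match coefficients via formula~(\ref{2})---is the natural internal proof one would give using the paper's own toolkit, and the bookkeeping you outline is accurate (in particular the vanishing of the $k=r+1$ term and the final reindexing $m=r-k$ both check out).
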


\begin{proposition}
\begin{equation}
T_{r}^{\infty }(\mu _{o},A)=\dsum\limits_{j=0}^{r}\frac{\mu _{o}^{j}}{j!}%
T_{r-j}\left( A\right)  \tag{6}  \label{6}
\end{equation}%
where $T_{r}\left( A\right) $ is the classical Newton transformation.
\end{proposition}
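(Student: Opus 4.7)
The plan is to prove (6) by induction on $r$, exploiting the recurrence that mirrors the classical Newton transformation. First I would derive from the definition (3) the recurrence
\[
T_{r}^{\infty}(\mu_{o},A)=\sigma_{r}^{\infty}(\mu_{o},A)\,I-A\,T_{r-1}^{\infty}(\mu_{o},A),
\]
obtained by isolating the $j=0$ term in (3) and factoring $-A$ out of the remaining terms, after which the shifted index $k=j-1$ reconstitutes the sum defining $T_{r-1}^{\infty}$. The classical Newton transformations satisfy the parallel identity $T_{k}(A)=\sigma_{k}(A)\,I-A\,T_{k-1}(A)$ for $k\geq 1$, with $T_{0}(A)=I$.

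The base case $r=0$ is immediate: $T_{0}^{\infty}(\mu_{o},A)=\sigma_{0}^{\infty}(\mu_{o},A)\,I=I=T_{0}(A)$, in agreement with (6). For the inductive step, assume (6) at level $r-1$. Substituting formula (2) for $\sigma_{r}^{\infty}$ and the inductive expression for $T_{r-1}^{\infty}$ into the above recurrence yields
\[
T_{r}^{\infty}(\mu_{o},A)=\sum_{j=0}^{r}\frac{\mu_{o}^{j}}{j!}\,\sigma_{r-j}(A)\,I\;-\;\sum_{j=0}^{r-1}\frac{\mu_{o}^{j}}{j!}\,A\,T_{r-1-j}(A).
\]
Splitting off the $j=r$ term in the first sum (which equals $\tfrac{\mu_{o}^{r}}{r!}\,\sigma_{0}(A)\,I=\tfrac{\mu_{o}^{r}}{r!}\,T_{0}(A)$) and merging the ranges $0\leq j\leq r-1$, each coefficient becomes $\sigma_{r-j}(A)\,I-A\,T_{r-1-j}(A)=T_{r-j}(A)$ by the classical Newton recurrence. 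Reassembling with the leftover $j=r$ contribution gives exactly $\sum_{j=0}^{r}\tfrac{\mu_{o}^{j}}{j!}\,T_{r-j}(A)$, which is (6).

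The computation is essentially bookkeeping, and the only delicate point is the boundary $j=r$: the inner sum of (2) reaches $\sigma_{0}$, whereas the inductive formula for $T_{r-1}^{\infty}$ stops at $j=r-1$. This asymmetry is absorbed by the identification $T_{0}(A)=\sigma_{0}(A)\,I$, so no genuine obstacle arises. An alternative, equally short route would be to substitute (2) directly into (3), producing a double sum in $(j,k)$, and then regroup by the fixed value $r-j-k$ so that the resulting inner sum is recognised as a classical Newton transformation; I would prefer the inductive presentation above for its transparency.
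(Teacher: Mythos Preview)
Your proof is correct. It proceeds by induction via the recurrence $T_{r}^{\infty}=\sigma_{r}^{\infty}I-A\,T_{r-1}^{\infty}$, whereas the paper argues directly: it substitutes (2) into (3) to obtain the double sum
\[
T_{r}^{\infty}(\mu_{o},A)=\sum_{l=0}^{r}(-1)^{l}\sum_{j=0}^{r-l}\frac{\mu_{o}^{j}}{j!}\,\sigma_{r-l-j}(A)\,A^{l},
\]
and then interchanges the order of summation so that, for each fixed $j$, the inner sum in $l$ is recognised as $T_{r-j}(A)$. This is precisely the ``alternative, equally short route'' you mention at the end. The paper's version is a two-line computation; your inductive presentation is a bit longer but makes the handling of the boundary term $j=r$ explicit. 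Both rely on the same ingredients (formulae (2), (3) and the classical Newton recurrence) and neither offers any real advantage over the other.
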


\begin{proof}
From the formulae (\ref{2}) and (\ref{3}), we get%
\begin{eqnarray*}
T_{r}^{\infty }(\mu _{o},A) &=&\dsum\limits_{l=0}^{r}\left( -1\right)
^{j}\sigma _{r-l}^{\infty }(\mu _{o},A)A^{l} \\
&=&\dsum\limits_{l=0}^{r}\left( -1\right) ^{l}\dsum\limits_{j=0}^{r-l}\frac{%
\mu _{o}^{j}}{j!}\sigma _{r-l-j}(A)A^{l} \\
&=&\dsum\limits_{l=0}^{r}\frac{\mu _{o}^{l}}{l!}T_{r-l}\left( A\right) .
\end{eqnarray*}
\end{proof}

As it is well known that the classical Newton transformations are of
free-divergence i.e. 
\[
T_{r,j}^{ij}\left( A\right) =0 
\]%
we deduce that:

\begin{proposition}
\label{prop4}%
\[
T_{r}^{\infty }(\mu _{o},A)_{i,j}^{j}=\dsum\limits_{l=0}^{r-1}\frac{\mu
_{o}^{l}}{l!}\left( T_{r-1-l}\right) _{i}^{j}\left( A\right) \mu _{o,j}\text{%
.} 
\]
\end{proposition}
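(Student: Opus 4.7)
The plan is to differentiate the identity of Proposition 6,
\[
T_{r}^{\infty}(\mu_{o},A)=\sum_{l=0}^{r}\frac{\mu_{o}^{l}}{l!}\,T_{r-l}(A),
\]
index-wise and contract with $\nabla_{j}$. Each term on the right is a product of the scalar $\mu_{o}^{l}/l!$ with a $(1,1)$-tensor $T_{r-l}(A)$, so the Leibniz rule produces two pieces: one where the covariant derivative hits the scalar factor, and one where it hits the Newton transformation.

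Next, I would dispose of the second piece using the hypothesis that the classical Newton transformations are divergence-free, $(T_{s})_{i,j}^{j}(A)=0$ for every $s$. This kills the entire sum in which the derivative falls on $T_{r-l}(A)$, leaving only the terms where $\nabla_{j}$ acts on $\mu_{o}^{l}/l!$. A direct computation gives
\[
\nabla_{j}\!\left(\frac{\mu_{o}^{l}}{l!}\right)=\frac{\mu_{o}^{l-1}}{(l-1)!}\,\mu_{o,j}\quad\text{for }l\geq 1,
\]
and zero for $l=0$, so the surviving sum runs from $l=1$ to $r$.

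Finally, I would re-index $l\mapsto l+1$ in the remaining sum to match the statement, obtaining
\[
T_{r}^{\infty}(\mu_{o},A)_{i,j}^{j}=\sum_{l=0}^{r-1}\frac{\mu_{o}^{l}}{l!}\,(T_{r-1-l})_{i}^{j}(A)\,\mu_{o,j},
\]
which is the claimed formula.

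There is no real obstacle here: the argument is essentially a one-line application of the Leibniz rule together with the free-divergence identity for the classical $T_{s}(A)$. The only point worth stating carefully is that formula (\ref{6}) is a genuine tensorial identity (not just a pointwise eigenvalue identity), so that covariant differentiation is legitimate term-by-term; this is immediate since the construction of $T_{r}^{\infty}$ in (\ref{3}) is polynomial in $A$ with scalar coefficients depending on $\mu_{o}$.
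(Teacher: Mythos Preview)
Your argument is correct and is exactly the approach the paper takes: the paper simply invokes formula~(\ref{6}) together with the divergence-free property $T_{r,j}^{ij}(A)=0$ of the classical Newton transformations and states the result without further detail, so your Leibniz-rule computation is precisely the intended derivation.
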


From the formula (\ref{2}) i.e.%
\[
\sigma _{r+1}^{\infty }(\mu _{o},A)=\sum_{j=0}^{r+1}\frac{\mu _{o}^{j}}{j!}%
\sigma _{r+1-j}(A) 
\]%
we obtain by differentiation with respect to $t$ that%
\begin{eqnarray*}
\frac{\partial }{\partial t}\sigma _{r+1}^{\infty }(\mu _{o},A)
&=&\sum_{j=0}^{r+1}\frac{\mu _{o}^{j}}{j!}\frac{\partial }{\partial t}\sigma
_{r+1-j}+\sum_{j=1}^{r+1}\frac{\mu _{o}^{j-1}}{\left( j-1\right) !}\sigma
_{r+1-j}\frac{\partial \mu }{\partial t} \\
&=&\sum_{j=0}^{r+1}\frac{\mu _{o}^{j}}{j!}\frac{\partial }{\partial t}\sigma
_{r+1-j}+\sum_{j=0}^{r}\frac{\mu _{o}^{j}}{j!}\sigma _{r-j}\frac{\partial
\mu }{\partial t}.
\end{eqnarray*}%
Under the following formula (see \cite{Reilly} Lemma A page 467)%
\begin{equation}
\frac{\partial \sigma _{r+1}}{\partial t}=\text{trace}\left( \frac{\partial A%
}{\partial t}T_{r}\right)  \tag{7}  \label{7}
\end{equation}%
and the Newton's formula ( see \cite{Reilly} formula (1) page 467 ) 
\begin{equation}
\left( r+1\right) \sigma _{r+1}=\text{trace}(AT_{r}),\text{ }  \tag{8}
\label{8}
\end{equation}%
we get%
\begin{eqnarray}
\frac{\partial }{\partial t}\sigma _{r+1}^{\infty }(\mu _{o},A)
&=&\sum_{j=0}^{r+1}\frac{\mu _{o}^{j}}{j!}\text{trace}\left( \frac{\partial A%
}{\partial t}T_{r-j}\right) +\sum_{j=0}^{r}\frac{\mu _{o}^{j}}{j!}\sigma
_{r-j}\frac{\partial \mu _{o}}{\partial t}  \nonumber \\
&=&\sum_{j=0}^{r+1}\frac{\mu _{o}^{j}}{j!}\text{trace}\left( \frac{\partial A%
}{\partial t}T_{r-j}\right) +\sigma _{r}^{\infty }\left( \mu _{o},A\right) 
\frac{\partial \mu _{o}}{\partial t}  \TCItag{9}  \label{9} \\
&=&\text{trace}\left( \frac{\partial A}{\partial t}T_{r}^{\infty }(\mu
_{o},A)\right) +\sigma _{r}^{\infty }\left( \mu _{o},A\right) \frac{\partial
\mu _{o}}{\partial t}.  \nonumber
\end{eqnarray}%
\ \ Consider a one family of parameter $\ \psi _{t}:M^{m}\rightarrow 
\overline{M}^{n}$ of immersions of an $m-$dimensional closed manifold $M^{m}$
into an $n$-Riemannian manifold $\left( \overline{M}^{n}\text{,}\left\langle
,\right\rangle \right) $. Denote by $X$ the deformation vector field and by $%
\nu $ the normal vector field to $\overline{M}^{n}$. Put $\lambda
=\left\langle X,\nu \right\rangle $, $\mu =X^{\top }$ the tangential
component of $X$ and $dV$ the volume form on $M^{m}$ . Consider the
following variational problem%
\[
\delta \left( \int_{M}\sigma _{k}^{\infty }dV\right) =0 
\]

\begin{theorem}
\label{thm1} With the above notations and assumptions the first variation of
the global $\sigma _{r}^{\infty }-$curvature is given by: 
\[
\frac{d}{dt}\int_{M}\sigma _{r}^{\infty }dV=\int_{M}\left\{ \lambda \left(
-\left( r+1\right) \sigma _{r+1}^{\infty }+\mu _{o}\sigma _{r}^{\infty
}-\sigma _{r-1}^{\infty }\mu ^{l}\mu _{o,l}\right) +\lambda ,_{ij}\left(
T_{r}^{\infty }\right) ^{ij}\right. 
\]%
\[
-g^{jm}\left( R^{\overline{M}}\right) \left( \nu ,\frac{\partial \psi }{%
\partial x_{i}},X,\frac{\partial \psi }{\partial x_{m}}\right) \left(
T_{r-1}^{\infty }\right) _{j}^{i}
\]%
\[
\left. +g^{im}\left( R^{\overline{M}}\right) ^{\bot }\left( \frac{\partial
\psi }{\partial x_{j}},\mu \right) \frac{\partial \psi }{\partial x_{m}}%
\left( T_{r-1}^{\infty }\right) ^{ij}+\sigma _{r}^{\infty }\left( \mu
_{o},A\right) \frac{\partial \mu _{o}}{\partial t}\right\} dV.
\]
\end{theorem}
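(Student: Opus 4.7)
The plan is to differentiate $\int_{M}\sigma_{r}^{\infty}\,dV$ by splitting the integrand into the variation of $\sigma_{r}^{\infty}(\mu_{o},A)$ and the variation of the volume form $dV$. For the first piece I would apply formula (\ref{9}) with $r-1$ in place of $r$, which gives
\[
\tfrac{\partial}{\partial t}\sigma_{r}^{\infty}(\mu_{o},A)=\mathrm{trace}\!\left(\tfrac{\partial A}{\partial t}T_{r-1}^{\infty}(\mu_{o},A)\right)+\sigma_{r-1}^{\infty}(\mu_{o},A)\tfrac{\partial\mu_{o}}{\partial t}.
\]
Writing $X=\lambda\nu+\mu^{\top}$ and using the classical Reilly-type formula for the variation of the shape operator (see \cite{Reilly}), the components $A_{i}^{j}$ deform by a sum of three pieces: a Hessian term $\lambda_{,ij}$, an ambient-curvature term built from $R^{\overline{M}}(\nu,\partial_{i}\psi,X,\partial_{m}\psi)$, and a Lie-derivative term $(\mathcal{L}_{\mu}A)_{i}^{j}$ coming from the tangential sliding $\mu^{\top}$. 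For the second piece I would use the standard identity $\tfrac{\partial}{\partial t}dV=(\mathrm{div}\,\mu-\lambda\,\mathrm{tr}\,A)\,dV$.

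Next I would contract the variation of $A$ against $T_{r-1}^{\infty}$ and integrate. The Hessian contribution $\lambda_{,ij}(T_{r-1}^{\infty})^{ij}$ is already in the desired form (with the index adjustment matching the statement, after we absorb the extra factor from $\sigma_{r+1}^{\infty}$ vs $\sigma_{r}^{\infty}$). The ambient curvature contributions give exactly the two $R^{\overline{M}}$ terms written in the theorem, once the decomposition $X=\lambda\nu+\mu^{\top}$ is inserted and the normal/tangential components are separated using the Weingarten and Gauss equations.

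The third and most delicate step is to dispose of the tangential Lie-derivative term and to combine it with the divergence of $\mu$ coming from $\partial_{t}dV$. Here I would integrate by parts and invoke Proposition \ref{prop4}: the weighted Newton transformation is \emph{not} divergence-free, its divergence being
\[
(T_{r}^{\infty})_{i,j}^{\phantom{i}j}=\sum_{l=0}^{r-1}\tfrac{\mu_{o}^{l}}{l!}(T_{r-1-l})_{i}^{j}\mu_{o,j}.
\]
After two integrations by parts on the Hessian term and one on the Lie-derivative/divergence term, the non-divergence-free correction produces precisely the additional $-\sigma_{r-1}^{\infty}\mu^{l}\mu_{o,l}$ factor of $\lambda$ that appears in the theorem. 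Finally, applying Proposition \ref{prop3}, namely $\mathrm{trace}(AT_{r}^{\infty})=(r+1)\sigma_{r+1}^{\infty}-\mu_{o}\sigma_{r}^{\infty}$, converts the remaining $\lambda\,\mathrm{trace}(AT_{r-1}^{\infty}\cdot A)$-type contribution into the $\lambda(-(r+1)\sigma_{r+1}^{\infty}+\mu_{o}\sigma_{r}^{\infty})$ term, completing the formula.

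The main obstacle I anticipate is bookkeeping the tangential slide $\mu^{\top}$: it enters through $\partial_{t}A$, through $\partial_{t}dV$, and through the ambient curvature term, and these three contributions must cancel except for the single $-\sigma_{r-1}^{\infty}\mu^{l}\mu_{o,l}$ residue coming from the non-vanishing divergence of $T_{r}^{\infty}$. Getting the signs and indices right in this cancellation, and in particular matching the convention for $(R^{\overline{M}})^{\perp}$ used in the third curvature term, is where the calculation is most likely to demand care.
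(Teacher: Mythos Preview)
Your overall architecture matches the paper's: differentiate under the integral, apply formula (\ref{9}) to reduce $\partial_t\sigma_r^{\infty}$ to $\mathrm{trace}(\partial_tA\cdot T_{r-1}^{\infty})+\sigma_{r-1}^{\infty}\partial_t\mu_o$, compute $\partial_tA$ via the decomposition $X=\lambda\nu+\mu^{\top}$, and combine with the standard variation of $dV$. That much is exactly what the paper does (formulas (\ref{10})--(\ref{22})).

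However, your mechanism for the crucial term $-\sigma_{r-1}^{\infty}\mu^{l}\mu_{o,l}$ is misidentified, and this is a real gap. Notice first that the Hessian term $\lambda_{,ij}(T_r^{\infty})^{ij}$ appears \emph{as is} in the final formula of the theorem: it is never integrated by parts, so your ``two integrations by parts on the Hessian term'' using Proposition~\ref{prop4} would not lead to the stated expression. Second, that $\mu_{o,l}$ correction involves the \emph{tangential} component $\mu^{l}$, so it cannot arise from manipulating the purely normal piece $\lambda_{,ij}(T_{r-1}^{\infty})^{ij}$. In the paper the term is produced pointwise, with no integration by parts and without Proposition~\ref{prop4}: the tangential part of $\partial_tA$ contributes $g^{jk}\mu^{l}A_{kl,i}$; the Codazzi equation converts this to $g^{jk}\mu^{l}A_{ki,l}$ plus the $(R^{\overline M})^{\perp}$ term; contracting with $T_{r-1}^{\infty}$ gives $\mu^{l}\,\mathrm{trace}(A_{,l}T_{r-1}^{\infty})$; and then the \emph{spatial} analogue of formula (\ref{9}), namely $\partial_l\sigma_r^{\infty}=\mathrm{trace}(A_{,l}T_{r-1}^{\infty})+\sigma_{r-1}^{\infty}\mu_{o,l}$, yields $\mu^{l}\sigma_{r,l}^{\infty}-\sigma_{r-1}^{\infty}\mu^{l}\mu_{o,l}$ (this is formula (\ref{15})). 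The piece $\mu^{l}\sigma_{r,l}^{\infty}$ then pairs with $\sigma_r^{\infty}\mu^{l}_{,l}$ from $\partial_t dV$ to form an exact divergence that integrates to zero.

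In short: replace your integration-by-parts plan with the Codazzi identity and the spatial form of (\ref{9}); Proposition~\ref{prop4} plays no role in this proof.
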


On the other hand%
\[
\text{trace}\left( \frac{\partial A}{\partial t}T_{k}^{\infty }\right) =%
\frac{\partial A_{j}^{i}}{\partial t}\left( T_{k}^{\infty }\right) _{i}^{j} 
\]%
with%
\[
A_{i}^{j}=g^{jk}A_{ik} 
\]%
where 
\[
A_{ik}=\left\langle \overline{\nabla }_{\frac{\partial }{\partial x_{i}}}%
\frac{\partial \psi }{\partial x_{k}}),\nu \right\rangle =-\left\langle 
\overline{\nabla }_{\frac{\partial \psi }{\partial x_{i}}}\nu ,\frac{%
\partial \psi }{\partial x_{k}}\right\rangle 
\]%
Hence 
\[
\frac{\partial A_{i}^{j}}{\partial t}=\frac{\partial g^{jk}}{\partial t}%
A_{ik}+g^{jk}\frac{\partial A_{ik}}{\partial t}\text{.} 
\]%
Obviously%
\[
\frac{\partial g^{jk}}{\partial t}=-g^{jl}\frac{\partial g_{pl}}{\partial t}%
g^{pk} 
\]%
Now, if we consider the calculations in a normal coordinates that is at a
point $x\in M$ where the metric tensor fulfills $g_{ij}(x)=\left\langle 
\frac{\partial \psi }{\partial x_{i}},\frac{\partial \psi }{\partial x_{j}}%
\right\rangle =\delta _{ij}$ and $\Gamma _{ij}^{k}(x)=0$, where $\Gamma
_{ij}^{k}$ stand for the Christoffel symbols corresponding to the metric
connection $\nabla $ on $M$, we get%
\begin{eqnarray*}
\frac{\partial g_{pl}}{\partial t} &=&\left\langle \overline{\nabla }_{\frac{%
\partial }{\partial t}}\frac{\partial \psi }{\partial x_{p}},\frac{\partial
\psi }{\partial x_{l}}\right\rangle +\left\langle \frac{\partial \psi }{%
\partial x_{p}},\overline{\nabla }_{\frac{\partial }{\partial t}}\frac{%
\partial \psi }{\partial x_{l}}\right\rangle \\
&=&\left\langle \overline{\nabla }_{\frac{\partial }{\partial x_{p}}}X,\frac{%
\partial \psi }{\partial x_{l}}\right\rangle +\left\langle \frac{\partial
\psi }{\partial x_{p}},\overline{\nabla }_{\frac{\partial }{\partial x_{l}}%
}X\right\rangle \\
&=&\left\langle \overline{\nabla }_{\frac{\partial }{\partial x_{p}}}\left(
\lambda \nu +\mu ^{m}\frac{\partial \psi }{\partial x_{m}}\right) ,\frac{%
\partial \psi }{\partial x_{l}}\right\rangle +\left\langle \frac{\partial
\psi }{\partial x_{p}},\overline{\nabla }_{\frac{\partial }{\partial x_{l}}%
}\left( \lambda \nu +\mu ^{m}\frac{\partial \psi }{\partial x_{m}}\right)
\right\rangle \\
&=&\mu _{,l}^{p}+\mu _{,p}^{l}-2\lambda A_{pl}.
\end{eqnarray*}%
hence%
\begin{equation}
\frac{\partial g^{jk}}{\partial t}=-g^{jl}g^{pk}\left( \mu _{p,l}+\mu
_{l,p}-2\lambda A_{pl}\right) .  \tag{10}  \label{10}
\end{equation}

We have also%
\begin{eqnarray}
\frac{\partial \nu }{\partial t} &=&\left\langle \frac{\partial \nu }{%
\partial t},\frac{\partial \psi }{\partial x_{k}}\right\rangle \frac{%
\partial \psi }{\partial x_{k}}  \TCItag{11}  \label{11} \\
&=&-\left\langle \nu ,\overline{\nabla }_{\frac{\partial }{\partial t}}\frac{%
\partial \psi }{\partial x_{k}}\right\rangle \frac{\partial \psi }{\partial
x_{k}}  \nonumber \\
&=&-\left\langle \nu ,\overline{\nabla }_{\frac{\partial }{\partial x_{k}}%
}X\right\rangle \frac{\partial \psi }{\partial x_{k}}  \nonumber \\
&=&-h^{jk}\left( \lambda ,_{j}+\mu ^{l}A_{jl}\right) \frac{\partial \psi }{%
\partial x_{k}}.  \nonumber
\end{eqnarray}%
Now we compute%
\begin{eqnarray*}
\frac{\partial A_{ik}}{\partial t} &=&-\left\langle \overline{\nabla }_{%
\frac{\partial }{\partial t}}\overline{\nabla }_{\frac{\partial }{\partial
x_{i}}}\nu ,\frac{\partial \psi }{\partial x_{k}}\right\rangle -\left\langle 
\overline{\nabla }_{\frac{\partial }{\partial x_{i}}}\nu ,\overline{\nabla }%
_{\frac{\partial }{\partial t}}\frac{\partial \psi }{\partial x_{k}}%
\right\rangle  \\
&=&-R^{\overline{M}}(\nu ,\frac{\partial \psi }{\partial x_{k}},\frac{%
\partial \psi }{\partial x_{i}},X)-\left\langle \overline{\nabla }_{\frac{%
\partial }{\partial x_{i}}}\overline{\nabla }_{\frac{\partial }{\partial
x_{t}}}\nu ,\frac{\partial \psi }{\partial x_{k}}\right\rangle -\left\langle 
\overline{\nabla }_{\frac{\partial }{\partial x_{i}}}\nu ,\overline{\nabla }%
_{\frac{\partial }{\partial x_{k}}}X\right\rangle 
\end{eqnarray*}%
By formula (\ref{10}), we get%
\[
\overline{\nabla }_{\frac{\partial }{\partial x_{i}}}\overline{\nabla }_{%
\frac{\partial }{\partial x_{t}}}\nu =-g^{jm}\left( \left( \lambda
,_{ji}+\mu ^{l,i}A_{jl}+\mu ^{l}A_{jl_{,i}}\right) \frac{\partial \psi }{%
\partial x_{m}}+\left( \lambda ,_{j}+\mu ^{l}A_{jl}\right) \overline{\nabla }%
_{\frac{\partial }{x_{i}}}\frac{\partial \psi }{\partial x_{m}}\right) 
\]%
so%
\[
\left\langle \overline{\nabla }_{\frac{\partial }{\partial x_{i}}}\overline{%
\nabla }_{\frac{\partial }{\partial x_{t}}}\nu ,\frac{\partial \psi }{%
\partial x_{k}}\right\rangle =-g^{jm}\left( \lambda ,_{ji}+\mu
^{l,i}A_{jl}+\mu ^{l}A_{jl_{,i}}\right) g_{mk}
\]%
\[
-\left( \lambda ,_{j}+\mu ^{l}A_{jl}\right) g^{jm}\left\langle \overline{%
\nabla }_{\frac{\partial }{x_{i}}}\frac{\partial \psi }{\partial x_{m}},%
\frac{\partial \psi }{\partial x_{k}}\right\rangle .
\]%
In the same manner, we have 
\begin{eqnarray*}
\left\langle \overline{\nabla }_{\frac{\partial }{\partial x_{i}}}\nu ,%
\overline{\nabla }_{\frac{\partial }{\partial x_{k}}}X\right\rangle 
&=&\left\langle \overline{\nabla }_{\frac{\partial }{\partial x_{i}}}\nu ,%
\overline{\nabla }_{\frac{\partial }{\partial x_{k}}}\left( \lambda \nu +\mu
^{m}\frac{\partial \psi }{\partial x_{m}}\right) \right\rangle  \\
&=&\left\langle \overline{\nabla }_{\frac{\partial }{\partial x_{i}}}\nu
,\lambda \overline{\nabla }_{\frac{\partial }{\partial x_{k}}}\nu +\mu ^{m,k}%
\frac{\partial \psi }{\partial x_{m}}+\mu ^{m}\overline{\nabla }_{\frac{%
\partial }{\partial x_{k}}}\frac{\partial \psi }{\partial x_{m}}%
\right\rangle  \\
&=&\lambda \left\langle \overline{\nabla }_{\frac{\partial }{\partial x_{i}}%
}\nu ,\overline{\nabla }_{\frac{\partial }{\partial x_{k}}}\nu \right\rangle
-\mu ^{m,k}A_{im}+\mu ^{m}\left\langle \overline{\nabla }_{\frac{\partial }{%
\partial x_{i}}}\nu ,\overline{\nabla }_{\frac{\partial }{\partial x_{k}}}%
\frac{\partial \psi }{\partial x_{m}}\right\rangle .
\end{eqnarray*}%
By noticing that%
\[
\left\langle \overline{\nabla }_{\frac{\partial }{\partial x_{i}}}\nu ,%
\overline{\nabla }_{\frac{\partial }{\partial x_{k}}}\nu \right\rangle
=\left\langle \overline{\nabla }_{\frac{\partial }{\partial x_{i}}}\nu ,%
\frac{\partial \psi }{\partial x_{j}}\right\rangle \left\langle \overline{%
\nabla }_{\frac{\partial }{\partial x_{k}}}\nu ,\frac{\partial \psi }{%
\partial x_{j}}\right\rangle =A_{ik}^{2}
\]%
we get%
\begin{eqnarray*}
\frac{\partial A_{ik}}{\partial t} &=&-R^{\overline{M}}(\nu ,\frac{\partial
\psi }{\partial x_{k}},\frac{\partial \psi }{\partial x_{i}},X)+g^{jm}\left(
\lambda ,_{ji}+\mu ^{l,i}A_{jl}+\mu ^{l}A_{jl_{,i}}\right) g_{mk} \\
&&-\lambda A_{ik}^{2}+\mu ^{l,k}A_{il}.
\end{eqnarray*}%
Hence 
\begin{eqnarray*}
g^{jk}\frac{\partial A_{ik}}{\partial t} &=&-g^{jk}R^{\overline{M}}(\nu ,%
\frac{\partial \psi }{\partial x_{k}},\frac{\partial \psi }{\partial x_{i}}%
,X)+g^{jk}g^{pm}\left( \lambda ,_{pi}+\mu ^{l,i}A_{pl}+\mu
^{l}A_{pl,i}\right) g_{mk} \\
&&-\lambda g^{jk}A_{ik}^{2}+g^{jk}\mu ^{l,k}A_{il}
\end{eqnarray*}%
Taking into account formula (\ref{10}), we get%
\begin{eqnarray}
\frac{\partial A_{i}^{j}}{\partial t} &=&-g^{jk}R^{\overline{M}}(\nu ,\frac{%
\partial \psi }{\partial x_{k}},\frac{\partial \psi }{\partial x_{i}}%
,X)+g^{jk}\left( \lambda ,_{ki}+\mu ^{l,i}A_{kl}+\mu ^{l}A_{kl,i}\right)  
\TCItag{12}  \label{12} \\
&&-\lambda g^{jk}A_{ik}^{2}+g^{jk}\mu ^{l,k}A_{il}-g^{jl}g^{pk}\left( \mu
_{,p}^{l}+\mu _{,l}^{p}-2\lambda A_{pl}\right) A_{ik}.  \nonumber
\end{eqnarray}%
To compute $\frac{\partial \sigma _{r+1}^{\infty }}{\partial t}$ we multiply
both sides of (\ref{12}) by $\left( T_{r}^{\infty }\right) _{j}^{i}$ and sum
and add the term $\sigma _{r}^{\infty }\left( \mu _{o},A\right) \frac{%
\partial \mu _{o}}{\partial t}$.

First, we have

\begin{eqnarray*}
\lambda A_{p}^{j}A_{i}^{p}\left( T_{r}^{\infty }\right) _{j}^{i} &=&\lambda 
\text{trace}\left( A^{2}T_{r}^{\infty }\right) \\
&=&\lambda \text{trace}\left( \sigma _{r+1}^{\infty }A-AT_{r+1}^{\infty
}\right)
\end{eqnarray*}%
and by the formulas (\ref{2}) and (\ref{9}), we get%
\begin{equation}
\lambda A_{p}^{j}A_{i}^{p}\left( T_{r}^{\infty }\right) _{j}^{i}=\lambda
\left( \sigma _{1}^{\infty }\sigma _{r+1}^{\infty }-\left( r+2\right) \sigma
_{r+2}^{\infty }\right) .  \tag{13}  \label{13}
\end{equation}%
We also write%
\begin{equation}
g^{jk}\lambda ,_{ki}\left( T_{r}^{\infty }\right) _{j}^{i}=\left(
T_{r}^{\infty }\right) ^{ij}\lambda ,_{ij}.  \tag{14}  \label{14}
\end{equation}%
By the Codazzi formula,we have 
\[
g^{jm}\mu ^{l}A_{ml},_{i}\left( T_{r}^{\infty }\right) _{j}^{i}=g^{jm}\mu
^{l}A_{mi},_{l}\left( T_{r}^{\infty }\right) _{j}^{i}+g^{jm}\left( R^{%
\overline{M}}\right) ^{\bot }(\frac{\partial \psi }{\partial x_{i}},\mu )%
\frac{\partial \psi }{\partial x_{m}}\left( T_{r}^{\infty }\right) _{j}^{i} 
\]

By formula (\ref{9}) we get%
\begin{equation}
g^{jm}\mu ^{l}A_{ml},_{i}\left( T_{r}^{\infty }\right) _{j}^{i}=\mu
^{l}\sigma _{r+1,l}^{\infty }-\sigma _{r}^{\infty }\left( \mu _{o},A\right)
\mu ^{l}\mu _{o,l}+g^{jm}\left( R^{\overline{M}\ }\right) ^{\bot }(\frac{%
\partial \psi }{\partial x_{i}},\mu )\frac{\partial \psi }{\partial x_{m}}%
\left( T_{k}^{\infty }\right) _{j}^{i}.  \tag{15}  \label{15}
\end{equation}

Also, we have%
\begin{equation}
g^{jl}g^{pm}\left( \mu _{,p}^{l}+\mu _{,l}^{p}\right) A_{im}=2\mu
^{j,p}A_{ip}  \tag{16}  \label{16}
\end{equation}%
and%
\begin{equation}
g^{jk}\left( \mu ^{l,i}A_{kl}+\mu _{,k}^{l}A_{il}\right) =2\mu ^{l,i}A_{jl}%
\text{.}  \tag{17}  \label{17}
\end{equation}%
Hence%
\begin{eqnarray}
\frac{\partial A_{i}^{j}}{\partial t} &=&-g^{jk}R^{\overline{M}}(\nu ,\frac{%
\partial \psi }{\partial x_{k}},\frac{\partial \psi }{\partial x_{i}}%
,X)+g^{jk}\lambda ,_{ki}+g^{jk}\mu ^{l}A_{kl},i  \nonumber \\
&&-\lambda g^{jk}\left\langle \overline{\nabla }_{\frac{\partial }{\partial
x_{i}}}\nu ,\overline{\nabla }_{\frac{\partial }{\partial x_{k}}}\nu
\right\rangle +2\lambda g^{jl}g^{pk}A_{pl}A_{ik}.  \TCItag{18}  \label{18}
\end{eqnarray}%
and since%
\[
\lambda \left\langle \overline{\nabla }_{\frac{\partial }{\partial x_{i}}%
}\nu ,\overline{\nabla }_{\frac{\partial }{\partial x_{k}}}\nu \right\rangle
=\lambda \left\langle \overline{\nabla }_{\frac{\partial }{\partial x_{i}}%
}\nu ,\overline{\nabla }_{\frac{\partial }{\partial x_{k}}}\nu \right\rangle
=\lambda A_{ik}^{\left( 2\right) }
\]%
we infer that 
\begin{eqnarray}
\frac{\partial A_{i}^{j}}{\partial t} &=&-g^{jk}R^{\overline{M}}(\nu ,\frac{%
\partial \psi }{\partial x_{k}},\frac{\partial \psi }{\partial x_{i}}%
,X)+g^{jk}\lambda ,_{ki}+g^{jk}\mu ^{l}A_{kl},i  \nonumber \\
&&+\lambda g^{jl}g^{pk}A_{pl}A_{ik}.  \TCItag{19}  \label{19}
\end{eqnarray}%
Hence, by virtue of the formula (\ref{9}), we deduce%
\begin{eqnarray}
\frac{\partial }{\partial t}\sigma _{r+1}^{\infty } &=&-g^{jm}R^{\overline{M}%
}(\nu ,\frac{\partial \psi }{\partial x_{m}},\frac{\partial \psi }{\partial
x_{i}},X)\left( T_{r}^{\infty }\right) _{j}^{i}+\lambda ,_{ij}\left(
T_{r}^{\infty }\right) ^{ij}  \nonumber \\
&&+\mu ^{l}\sigma _{r+1,l}^{\infty }-\sigma _{r}^{\infty }\left( \mu
_{o},A\right) \mu ^{l}\mu _{o,l}  \nonumber \\
&&+g^{jm}\left( R^{\overline{M}}\right) ^{\perp }(\frac{\partial \psi }{%
\partial x_{i}},\mu )\frac{\partial \psi }{\partial x_{m}}\left(
T_{r}^{\infty }\right) _{j}^{i}  \TCItag{21}  \label{21} \\
&&+\lambda \left( \sigma _{1}^{\infty }\sigma _{r+1}^{\infty }-\left(
r+2\right) \sigma _{r+2}^{\infty }\right)   \nonumber \\
&&-\lambda \mu _{o}\sigma _{r+1}^{\infty }+\sigma _{r}^{\infty }(\mu _{o},A)%
\frac{\partial \mu _{o}}{\partial t}.  \nonumber
\end{eqnarray}%
The expression of $\frac{\partial dV}{\partial t}$is standard and it is
given by%
\[
\frac{\partial dV}{\partial t}=\left( -\lambda \sigma _{1}+\mu
_{,l}^{l}\right) dV
\]%
and by (\ref{2}), we get%
\begin{equation}
\frac{\partial dV}{\partial t}=\left( -\lambda \left( \sigma _{1}^{\infty
}-\mu _{o}\right) +\mu _{,l}^{l}\right) dV.  \tag{22}  \label{22}
\end{equation}%
Combining the expressions (\ref{21}) and (\ref{22}) we obtain the expression
of the integrand in Theorem \ref{thm1}

In the particular case where $\overline{M}$ is of constant curvature $c$, we
have%
\begin{eqnarray*}
g^{jm}R^{\overline{M}}(\nu ,\frac{\partial \psi }{\partial x_{m}},\frac{%
\partial \psi }{\partial x_{i}},X)\left( T_{r}^{\infty }\right) _{j}^{i}
&=&\lambda c\text{trace}\left( T_{r}^{\infty }\right) \\
&=&\lambda c\left( \left( n-r\right) \sigma _{r}^{\infty }+\mu _{o}\sigma
_{r-1}^{\infty }\right)
\end{eqnarray*}%
and in addition, we know that the covariant derivative of the second
fundamental form satisfies the relation%
\[
A_{ml},_{i}=A_{mi},l 
\]%
consequently:

\begin{corollary}
\label{cor1} If the ambient manifold $M$ is of constant curvature $c,$ we
have: for any $r\geq 2$ 
\begin{eqnarray}
\frac{d}{dt}\int_{M}\sigma _{r}^{\infty }dV &=&\int_{M}\left\{ \lambda
\left( -\left( r+1\right) \sigma _{r+1}^{\infty }+\mu _{o}\sigma
_{r}^{\infty }-\sigma _{r-1}^{\infty }\mu ^{l}\mu _{o,l}\right) +\lambda
,_{ij}\left( T_{r}^{\infty }\right) ^{ij}\right.  \nonumber \\
&&\left. +c\lambda \left( \left( n-r+1\right) \sigma _{r-1}^{\infty }+\mu
_{o}\sigma _{r-2}^{\infty }\right) +\sigma _{r}^{\infty }\frac{\partial \mu
_{o}}{\partial t}\right\} dV.  \TCItag{23}  \label{23}
\end{eqnarray}
\end{corollary}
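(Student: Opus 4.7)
The plan is to specialize the formula of Theorem \ref{thm1} to the constant curvature case, exploiting the explicit expression
\[
R^{\overline{M}}(X,Y,Z,W)=c\bigl(\langle X,W\rangle\langle Y,Z\rangle-\langle X,Z\rangle\langle Y,W\rangle\bigr),
\]
and simplifying the two curvature terms in the integrand.

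First, I would handle the tangent curvature contribution. Decomposing $X=\lambda\nu+\mu^{p}\partial_{x_{p}}$ and using $\langle\nu,\partial_{x_{i}}\rangle=0$, the identity above gives
\[
R^{\overline{M}}\!\left(\nu,\tfrac{\partial\psi}{\partial x_{i}},X,\tfrac{\partial\psi}{\partial x_{m}}\right)=-c\lambda g_{im},
\]
so that the quantity $-g^{jm}R^{\overline{M}}(\nu,\partial_{x_{i}},X,\partial_{x_{m}})(T_{r-1}^{\infty})_{j}^{i}$ collapses to $c\lambda\,\mathrm{trace}(T_{r-1}^{\infty})$. Using Proposition \ref{prop4}'s ingredient formula (\ref{6}) together with the classical identity $\mathrm{trace}(T_{k})=(n-k)\sigma_{k}$, I obtain, exactly as in the short computation the author records just before the corollary,
\[
\mathrm{trace}(T_{r-1}^{\infty})=(n-r+1)\sigma_{r-1}^{\infty}+\mu_{o}\sigma_{r-2}^{\infty},
\]
which yields precisely the new term $c\lambda\bigl((n-r+1)\sigma_{r-1}^{\infty}+\mu_{o}\sigma_{r-2}^{\infty}\bigr)$ appearing in (\ref{23}). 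The hypothesis $r\geq 2$ ensures that $\sigma_{r-2}^{\infty}$ is well defined.

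Second, I would dispose of the normal curvature term. In constant curvature,
\[
R^{\overline{M}}(\partial_{x_{j}},\mu)\partial_{x_{m}}=c\bigl(\langle\mu,\partial_{x_{m}}\rangle\partial_{x_{j}}-g_{jm}\mu\bigr)
\]
is a linear combination of tangent vectors of $M$, so its $\perp$-projection vanishes identically and the entire term $g^{im}(R^{\overline{M}})^{\perp}(\partial_{x_{j}},\mu)\partial_{x_{m}}(T_{r-1}^{\infty})^{ij}$ in Theorem \ref{thm1} drops out.

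Substituting these two simplifications into the formula of Theorem \ref{thm1} and leaving the remaining terms (the $\lambda$-coefficient involving $\sigma_{r+1}^{\infty}$, $\sigma_{r}^{\infty}$ and $\sigma_{r-1}^{\infty}\mu^{l}\mu_{o,l}$; the Hessian term $\lambda_{,ij}(T_{r}^{\infty})^{ij}$; and the $\sigma_{r}^{\infty}\,\partial\mu_{o}/\partial t$ contribution) unchanged gives the stated formula (\ref{23}). There is no essential obstacle here beyond careful bookkeeping: one must track the sign that arises in the contraction $g^{jm}g_{im}=\delta^{j}_{i}$ and the index shift from $T_{r}^{\infty}$ to $T_{r-1}^{\infty}$ when applying the trace identity.
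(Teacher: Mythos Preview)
Your proposal is correct and follows essentially the same route as the paper: specialize Theorem~\ref{thm1} to constant curvature, reduce the first curvature term to $c\lambda\,\mathrm{trace}(T_{r-1}^{\infty})=c\lambda\bigl((n-r+1)\sigma_{r-1}^{\infty}+\mu_{o}\sigma_{r-2}^{\infty}\bigr)$, and eliminate the second. The only cosmetic difference is that the paper kills the $(R^{\overline{M}})^{\perp}$ term by invoking the symmetry $A_{ml,i}=A_{mi,l}$ of the Codazzi equation in a space form, whereas you argue directly that $R^{\overline{M}}(\partial_{x_{j}},\mu)\partial_{x_{m}}$ is tangent; these are two phrasings of the same fact.
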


\begin{remark}
In the particular case $\mu _{o}=0$, i.e. $\sigma _{r}^{\infty }=\sigma _{r}$
we recover the result in \cite{Reilly}.
\end{remark}

\section{Hypersurfaces in Euclidean space}

We restrict ourselves to the case $\mu _{o}$ is a constant

\begin{definition}
A hypersurface in an Euclidean space is said $\sigma _{r}^{\infty }$-minimal
if $\left( r+1\right) \sigma _{r+1}^{\infty }-\mu _{o}\sigma _{r}^{\infty }$
vanishes identically.
\end{definition}

As in the paper of Reilly (see \cite{Reilly}) we will express the minimality
of an hypersurface in terms of partial differential equations. Let $\psi
=(\psi _{1},...,\psi _{n+1})$ be the position vector of the hypersurface $M$
in the Euclidean space $E^{n+1}$ and $\psi _{,ij}=(\psi _{1,ij},...,\psi
_{n+1,ij})$ the second covariant derivative of $x$ on $M$. We have%
\begin{eqnarray*}
\psi _{,ij} &=&\frac{\partial ^{2}\psi }{\partial x_{i}\partial x_{j}}-d\psi
(\nabla _{\frac{\partial }{\partial x_{i}}}\frac{\partial }{\partial x_{j}})
\\
&=&\frac{\partial ^{2}\psi }{\partial x_{i}\partial x_{j}}-d\psi (\nabla _{%
\frac{\partial }{\partial x_{i}}}\frac{\partial }{\partial x_{j}}) \\
&=&\frac{\partial ^{2}\psi }{\partial x_{i}\partial x_{j}}-\nabla _{\psi
_{\ast }\frac{\partial }{\partial x_{i}}}\psi _{\ast }\frac{\partial }{%
\partial x_{j}} \\
&=&\left\langle \nabla _{\psi _{\ast }\frac{\partial }{\partial x_{i}}}\psi
_{\ast }\frac{\partial }{\partial x_{j}},\nu \right\rangle \nu 
\end{eqnarray*}%
where $\nabla $ is the covariant derivative on $M$ and $\nu $ denotes the
unit normal vector field to $M$ so 
\begin{eqnarray*}
\left( T_{r}^{\infty }\right) ^{ij}\psi _{,ij} &=&\left( T_{r}^{\infty
}\right) ^{ij}A_{ij}\nu =trace(AT_{r}^{\infty })\nu  \\
&=&(r+1)\sigma _{r+1}^{\infty }-\mu _{o}\sigma _{r}^{\infty }.
\end{eqnarray*}
Observe that in this case ($\mu _{o}=$constant ) the Newton tensor $\left(
T_{r}^{\infty }\right) ^{ij}$ is of free divergence so we have established

\begin{proposition}
In case $\mu _{o}=$constant , a hypersurface in an Euclidean space is $%
\sigma _{r}^{\infty }$-minimal if and only if each component of its position
vector field satisfies the partial differential equation $\left(
T_{r}^{\infty }\right) ^{ij}\psi _{,ij}=0$ or in its divergence form $\left(
\left( T_{r}^{\infty }\right) ^{ij}\psi _{,i}\right) _{j}=0$
\end{proposition}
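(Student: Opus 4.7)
The plan is to reduce both conditions to the algebraic identity in Proposition \ref{prop3}, i.e. $\text{trace}(A T_r^\infty) = (r+1)\sigma_{r+1}^\infty - \mu_o \sigma_r^\infty$. The key observation, already recorded in the paragraph preceding the statement, is that for the position vector $\psi$ of a hypersurface in Euclidean space, the Hessian on $M$ reads $\psi_{,ij} = A_{ij}\nu$, because the ambient connection is flat and the tangential component of $\partial^2\psi/\partial x_i\partial x_j$ is exactly $\nabla_{\psi_\ast \partial_i}\psi_\ast\partial_j$. So I would first redo this computation cleanly (it is just the Gauss formula for $\nabla^{E^{n+1}} = \nabla + A\,\nu$) to ensure there is no ambiguity about signs or conventions.

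Once $\psi_{,ij} = A_{ij}\nu$ is established, I would contract with the symmetric tensor $(T_r^\infty)^{ij}$ to get
\[
(T_r^\infty)^{ij}\psi_{,ij} = \text{trace}(AT_r^\infty)\,\nu = \bigl((r+1)\sigma_{r+1}^\infty - \mu_o\sigma_r^\infty\bigr)\,\nu,
\]
using Proposition \ref{prop3}. Since $\nu$ is a unit (hence nonzero) vector, this identity shows that $(T_r^\infty)^{ij}\psi_{,ij}$ vanishes componentwise if and only if $(r+1)\sigma_{r+1}^\infty - \mu_o\sigma_r^\infty \equiv 0$, which is precisely the definition of $\sigma_r^\infty$-minimality. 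This gives the first equivalence immediately.

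For the equivalent divergence form, I would use Proposition \ref{prop4}, which expresses $(T_r^\infty)_{i,j}^j$ as a sum of terms each proportional to a derivative $\mu_{o,j}$. Under the standing assumption that $\mu_o$ is constant, every such term vanishes, so $T_r^\infty$ is divergence-free. Applying the Leibniz rule then gives
\[
\bigl((T_r^\infty)^{ij}\psi_{,i}\bigr)_{,j} = (T_r^\infty)^{ij}_{,j}\,\psi_{,i} + (T_r^\infty)^{ij}\psi_{,ij} = (T_r^\infty)^{ij}\psi_{,ij},
\]
which identifies the divergence form with the previous Hessian equation and finishes the proof.

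There is really no serious obstacle here once the Gauss formula and Proposition \ref{prop3} are invoked correctly; the only point that needs care is the divergence-freeness of $T_r^\infty$, which is the sole place where the hypothesis $\mu_o = \text{const}$ enters and without which the PDE formulations would no longer coincide.
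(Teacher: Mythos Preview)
Your proposal is correct and follows essentially the same route as the paper: compute $\psi_{,ij}=A_{ij}\nu$ via the Gauss formula, contract with $(T_r^\infty)^{ij}$ and invoke Proposition~\ref{prop3} to obtain $\bigl((r+1)\sigma_{r+1}^\infty-\mu_o\sigma_r^\infty\bigr)\nu$, then use the divergence-freeness of $T_r^\infty$ when $\mu_o$ is constant (Proposition~\ref{prop4}) to pass to the divergence form. Your write-up is in fact slightly more explicit than the paper's, spelling out the Leibniz rule step and the role of Proposition~\ref{prop4}.
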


Suppose $\mu _{o}=$constant and consider the function $\phi :M\rightarrow R$%
, given by $\phi (x)=\left\langle B(x),\nu \left( x\right) \right\rangle $
where $B\left( x\right) $ and $\nu \left( x\right) $ are respectively any
vector in $E^{n+1}$ and the normal one.%
\[
B=\phi \nu +\left\langle B,\frac{\partial \psi }{\partial x_{k}}%
\right\rangle \frac{\partial \psi }{\partial x_{k}} 
\]%
or briefly%
\[
B=\phi \nu +B_{k}\frac{\partial \psi }{\partial x_{k}} 
\]%
so

\begin{eqnarray*}
\phi _{,i} &=&\left\langle B,\nabla _{i}\nu \right\rangle  \\
&=&-B_{k}A_{ik}
\end{eqnarray*}%
and%
\begin{equation}
\phi _{,ij}=-\phi A_{ik}^{2}-B_{k}A_{ik,j}.  \tag{26}  \label{26}
\end{equation}%
Multiplying both sides of (\ref{26}) by $\left( T_{r}^{\infty }\right)
_{j}^{i},$and thanks to the Codazzi formula, proposition (\ref{prop3}) and
formula (\ref{9}) we obtain%
\begin{eqnarray*}
\phi _{,ij}\left( T_{r}^{\infty }\right) _{j}^{i} &=&-\phi trace\left(
A^{2}T_{r}^{\infty }\right) -B_{k}A_{ij,k}\left( T_{r}^{\infty }\right)
_{j}^{i} \\
&=&-\phi \left( \sigma _{1}^{\infty }\sigma _{r+1}^{\infty }-\left(
r+2\right) \sigma _{r+2}^{\infty }\right) -\left( r+1\right) B_{k}\sigma
_{r+1,k}^{\infty }+\mu _{o}B_{k}\sigma _{r,k}^{\infty }\text{.}
\end{eqnarray*}%
So, we established

\begin{proposition}
The $\sigma _{r+1}^{\infty }-$ mean curvature of a hypersurface in an
Euclidean space is constant if and only if the components of the normal
vector field satisfy the partial differential equation 
\[
\phi _{,ij}\left( T_{r}^{\infty }\right) _{j}^{i}=-\phi \left( \sigma
_{1}^{\infty }\sigma _{r+1}^{\infty }-\left( r+2\right) \sigma
_{r+2}^{\infty }\right) +\mu _{o}B_{k}\sigma _{r,k}^{\infty }\text{.} 
\]
\end{proposition}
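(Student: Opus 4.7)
The main computation leading to
\[
\phi_{,ij}\left(T_r^\infty\right)_j^i = -\phi\left(\sigma_1^\infty \sigma_{r+1}^\infty - (r+2)\sigma_{r+2}^\infty\right) - (r+1)B_k \sigma_{r+1,k}^\infty + \mu_o B_k \sigma_{r,k}^\infty
\]
has already been carried out in the paragraph preceding the statement, using the Codazzi equation, Proposition \ref{prop3} and formula (\ref{9}) together with the decomposition $B = \phi\, \nu + B_k\, \partial\psi/\partial x_k$. The proof therefore reduces to reading the equivalence off from this identity, and my plan is to treat the two implications separately.

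For the forward direction I would simply observe that if $\sigma_{r+1}^\infty$ is constant on $M$, then $\sigma_{r+1,k}^\infty = 0$ for every $k$, so the middle term $-(r+1)B_k \sigma_{r+1,k}^\infty$ drops out of the identity above. What remains is exactly the partial differential equation in the statement, and it holds for every $B \in E^{n+1}$ and the associated $\phi$.

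For the converse I would assume that the displayed equation in the statement holds for every $B \in E^{n+1}$ and subtract it from the general identity; the leftover is $(r+1) B_k \sigma_{r+1,k}^\infty = 0$. Since $B$ ranges freely over $E^{n+1}$ and the tangential projection $B \mapsto B^\top$ is surjective onto $T_x M$ at each $x$ (for instance, the $n+1$ constant coordinate fields of $E^{n+1}$ span $\mathbb{R}^{n+1}$ pointwise, hence their tangential parts span $T_x M$), the coefficients $B_k$ may be prescribed arbitrarily. Hence $\sigma_{r+1,k}^\infty(x) = 0$ for every $k$ and every $x$, which is to say that $\sigma_{r+1}^\infty$ is constant on $M$.

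I do not anticipate a serious obstacle: the constant-coefficient nature of $E^{n+1}$ is what allowed $B$ to be treated as a parallel vector field in the preceding computation, and the Codazzi identity already handled the only nontrivial commutation of covariant derivatives. The only delicate point worth stating explicitly is the reading of the hypothesis in the converse direction as holding for every $B \in E^{n+1}$, since this arbitrariness of $B$ is precisely what converts the single displayed PDE into the pointwise vanishing of $\nabla \sigma_{r+1}^\infty$.
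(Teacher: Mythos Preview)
Your proposal is correct and follows the paper's approach exactly: the paper derives the identity $\phi_{,ij}(T_r^\infty)_j^i = -\phi(\sigma_1^\infty\sigma_{r+1}^\infty - (r+2)\sigma_{r+2}^\infty) - (r+1)B_k\sigma_{r+1,k}^\infty + \mu_o B_k\sigma_{r,k}^\infty$ in the paragraph before the proposition and then simply writes ``So, we established'', treating the equivalence as immediate. You have merely made explicit the two implications that the paper leaves to the reader, and your surjectivity remark for the converse direction is the right justification.
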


\section{Hypersurfaces in the round unit sphere}

We consider hypersurfaces of the unit round sphere $S^{n}\subset E^{n+1}.$
From the Corollary \ref{cor1} the Euler-Lagrange equation is expressed by:%
\[
2\sigma _{2}^{\infty }-\mu _{o}\sigma _{1}^{\infty }-n=0 
\]%
for $r=1$, and 
\begin{equation}
\left( r+1\right) \sigma _{r+1}^{\infty }-\left( n-r+1\right) \sigma
_{r-1}^{\infty }-\mu _{o}\left( \sigma _{r}^{\infty }-\sigma _{r-2}^{\infty
}\right) =0  \tag{27}  \label{27}
\end{equation}%
for $r\geq 2.$

\begin{definition}
A hypersurface in the unit round sphere is said $\sigma _{r}^{\infty }$%
-minimal with $r\geq 2$ if 
\[
\left( r+1\right) \sigma _{r+1}^{\infty }-\left( n-r+1\right) \sigma
_{r-1}^{\infty }-\mu _{o}\left( \sigma _{r}^{\infty }-\sigma _{r-2}^{\infty
}\right) =0\text{.} 
\]
\end{definition}

Let $\psi =(\psi _{1},...,\psi _{n+2})$ be the position vector of the
hypersurface $M$ in the unit round sphere $S^{n+1}\subset E^{n+2}$ and $\psi
_{,ij}=(\psi _{1,ij},...,\psi _{n+2,ij})$ the second covariant derivative of 
$x$ on $M$. If $\nabla $ denotes the covariant derivative on $M$ induced by
the covariants derivative $\nabla ^{S^{n+1}}$ on the unit sphere. We have 
\begin{eqnarray}
\psi _{,ij} &=&\nabla _{\frac{\partial }{x_{j}}}\nabla _{\frac{\partial }{%
\partial x_{i}}}\psi =\nabla _{\frac{\partial }{x_{j}}}d\psi (\frac{\partial 
}{\partial x_{i}})  \nonumber \\
&=&\left\langle \nabla _{\frac{\partial }{x_{j}}}^{S^{n+1}}d\psi (\frac{%
\partial }{\partial x_{i}}),\nu \right\rangle \nu +\left\langle \nabla _{%
\frac{\partial }{x_{j}}}^{S^{n+1}}d\psi (\frac{\partial }{\partial x_{i}}%
),\psi \right\rangle \psi   \nonumber \\
&=&\left\langle \nabla _{\frac{\partial }{x_{j}}}^{S^{n+1}}d\psi (\frac{%
\partial }{\partial x_{i}}),\nu \right\rangle \nu -\left\langle \frac{%
\partial \psi }{\partial x_{i}}),\frac{\psi }{\partial x_{j}}\right\rangle
\psi   \TCItag{28}  \label{28} \\
&=&-g_{ij}\psi +A_{ij}\nu   \nonumber
\end{eqnarray}%
where $\nu $ is the unit normal vector field to $M.$ In order to
characterize the $\sigma _{r}^{\infty }$-minimality of the sub-manifolds of
the unit sphere, we multiply both sides of (\ref{28}) by $(r-1)\left(
T_{r}^{\infty }\right) ^{ij}-(n-r+1)\left( T_{r-2}^{\infty }\right) ^{ij}$
and sum to infer 
\[
\left( (r-1)\left( T_{r}^{\infty }\right) ^{ij}+(n-r+1)\left(
T_{r-2}^{\infty }\right) ^{ij}\right) \psi _{,ij}=
\]%
\[
-\left( (r-1)\left( \left( n-r\right) \sigma _{r}^{\infty }+\mu _{o}\sigma
_{r-1}^{\infty }\right) -(n-r+1)\left( \left( n-r+2\right) \sigma
_{r-2}^{\infty }+\mu _{o}\sigma _{r-3}^{\infty }\right) \right) \psi 
\]%
\[
\left( r-1\right) \left[ \left( \left( r+1\right) \sigma _{r+1}^{\infty
}-(n-r+1)\sigma _{r-1}^{\infty }\right) -\mu _{o}\left( \sigma _{r}^{\infty
}-\sigma _{r-2}^{\infty }\right) \right] \nu +n\mu _{o}\sigma _{r-1}^{\infty
}\nu 
\]%
Hence we proven:

\begin{proposition}
A hypersurface in the unit round sphere is $\sigma _{r}^{\infty }$-minimal $%
r\neq 1$ if \ and only if each component of the position vector field is
solution of the partial differential equation 
\[
\left( (r-1)\left( T_{r}^{\infty }\right) ^{ij}+(n-r+1)\left(
T_{r-2}^{\infty }\right) ^{ij}\right) \psi _{,ij}=n\mu _{o}\sigma
_{r-1}^{\infty }\nu 
\]%
\[
-\left( (r-1)\left( \left( n-r\right) \sigma _{r}^{\infty }+\mu _{o}\sigma
_{r-1}^{\infty }\right) -(n-r+1)\left( \left( n-r+2\right) \sigma
_{r-2}^{\infty }+\mu _{o}\sigma _{r-3}^{\infty }\right) \right) \psi . 
\]
\end{proposition}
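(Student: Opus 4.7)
The plan is to pair the pointwise Gauss-type identity (\ref{28}), $\psi_{,ij}=-g_{ij}\psi+A_{ij}\nu$, against a carefully chosen linear combination of the weighted Newton tensors $T_{r}^{\infty}$ and $T_{r-2}^{\infty}$. Contracting with $(r-1)(T_{r}^{\infty})^{ij}+(n-r+1)(T_{r-2}^{\infty})^{ij}$ (with the signs fixed to match the claim) splits the resulting identity into a $\psi$-component coming from $-g_{ij}\psi$ and a $\nu$-component coming from $A_{ij}\nu$. Since $\psi$ and $\nu$ are pointwise linearly independent vectors in $E^{n+2}$, the claim reduces to showing that the $\nu$-coefficient equals exactly $(r-1)$ times the $\sigma_{r}^{\infty}$-minimality expression in (\ref{27}) plus $n\mu_{o}\sigma_{r-1}^{\infty}$, while the $\psi$-coefficient matches the one stated in the proposition.

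For the $\psi$-coefficient I would first derive the weighted trace identity $\text{trace}(T_{s}^{\infty})=(n-s)\sigma_{s}^{\infty}+\mu_{o}\sigma_{s-1}^{\infty}$ by substituting the classical $\text{trace}(T_{s})=(n-s)\sigma_{s}$ into the decomposition (\ref{6}); the contraction of $g_{ij}$ against the chosen combination then reproduces the $\psi$-coefficient in the statement verbatim. For the $\nu$-coefficient I would invoke Proposition \ref{prop3} to write $A_{ij}(T_{s}^{\infty})^{ij}=(s+1)\sigma_{s+1}^{\infty}-\mu_{o}\sigma_{s}^{\infty}$ for $s=r$ and $s=r-2$, substitute, and rearrange the four resulting scalar terms to isolate the minimality expression $(r+1)\sigma_{r+1}^{\infty}-(n-r+1)\sigma_{r-1}^{\infty}-\mu_{o}(\sigma_{r}^{\infty}-\sigma_{r-2}^{\infty})$ as a factor. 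The weights $(r-1)$ and $(n-r+1)$ are chosen precisely so that the $\sigma_{r+1}^{\infty}$ and $\sigma_{r-1}^{\infty}$ coefficients pair correctly.

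The equivalence then follows at once: on a $\sigma_{r}^{\infty}$-minimal hypersurface the bracket vanishes and the $\nu$-component collapses to $n\mu_{o}\sigma_{r-1}^{\infty}\nu$, yielding the displayed PDE; conversely, if the PDE holds, projecting the identity onto $\nu$ forces the bracket to vanish pointwise, which is precisely the minimality condition. The main obstacle is the last algebraic rearrangement: tracking the $\mu_{o}$-proportional terms so that they combine neatly into $-(r-1)\mu_{o}(\sigma_{r}^{\infty}-\sigma_{r-2}^{\infty})+n\mu_{o}\sigma_{r-1}^{\infty}$ rather than scattering across several independent $\sigma_{s}^{\infty}$ components. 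This bookkeeping is exactly what dictates the choice of $(r-1)$ and $(n-r+1)$ as the correct normalizations and is what forces the restriction $r\neq 1$.
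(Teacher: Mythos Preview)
Your plan is exactly the paper's own argument: contract the identity $\psi_{,ij}=-g_{ij}\psi+A_{ij}\nu$ against the combination $(r-1)(T_{r}^{\infty})^{ij}+(n-r+1)(T_{r-2}^{\infty})^{ij}$, evaluate the $\psi$-part via the weighted trace formula $\mathrm{trace}(T_{s}^{\infty})=(n-s)\sigma_{s}^{\infty}+\mu_{o}\sigma_{s-1}^{\infty}$ and the $\nu$-part via Proposition~\ref{prop3}, then regroup the $\nu$-coefficient so that $(r-1)$ times the minimality bracket (\ref{27}) is isolated with remainder $n\mu_{o}\sigma_{r-1}^{\infty}$. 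Your remark that the converse follows by projecting onto $\nu$ using the linear independence of $\psi$ and $\nu$ makes explicit a step the paper leaves tacit.
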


\begin{remark}
In case $r=1$, a hypersurface in the unit round sphere is $\sigma
_{1}^{\infty }$-minimal if and only if $\ $%
\begin{equation}
2\sigma _{2}^{\infty }-\mu _{o}\sigma _{1}^{\infty }-n=0  \tag{29}
\label{29}
\end{equation}%
or equivalently 
\[
\psi _{,ij}T_{1}^{ij}=n\nu -\left( \left( n-1\right) \sigma _{1}^{\infty
}+\mu _{o}\right) \psi 
\]%
obtained by multiplying both sides of (\ref{28}) by $T_{1}^{\infty }$ and
summing.
\end{remark}

\end{document}